\newtheorem{theorem}{Theorem}[section]
\newtheorem{corollary}[theorem]{Corollary}
\newtheorem{lemma}[theorem]{Lemma}
\newtheorem{definition}[theorem]{Definition}
\newtheorem*{definition*}{Definition}
\def\F{F}
\def\RR{\mathcal{R}}
\def\S{\mathcal{S}}
\newcommand{\I}{\mathcal{I}}
\begin{document}
\title{Four-variable expanders over the prime fields}

\author{
 D. Koh\thanks{Department of Mathematics, Chungbuk National University.
    Email: {\tt koh131@chungbuk.ac.kr
}}
\and 
    H. Mojarrad\thanks{Department of Mathematics, EPFL, Lausanne. 
    Email: {\tt hossein.mojarrad@epfl.ch}}
    \and 
    T. Pham\thanks{Department of Mathematics, UCSD
    Email: {\tt v9pham@ucsd.edu}}
    \and 
    C. Valculescu\thanks{Department of Mathematics, EPFL, Lausanne. 
    Email: {\tt adrian.valculescu@epfl.ch}}
}
\date{}
\maketitle  
\begin{abstract}
Let $\mathbb{F}_p$ be a prime field of order $p>2$, and $A$ be a set in $\mathbb{F}_p$ with very small size in terms of $p$. In this note, we show that the number of distinct cubic distances determined by points in $A\times A$ satisfies
\[|(A-A)^3+(A-A)^3|\gg |A|^{8/7},\]
which improves a result  due to Yazici, Murphy, Rudnev, and Shkredov. In addition, we investigate some new families of expanders in four and five variables.  We also give an explicit exponent of a problem of Bukh and Tsimerman, namely, we prove that
 \[\max \left\lbrace |A+A|, |f(A, A)|\right\rbrace\gg |A|^{6/5},\]
 where $f(x, y)$ is a quadratic polynomial in $\mathbb{F}_p[x, y]$ that is not of the form $g(\alpha x+\beta y)$ for some univariate polynomial $g$. 
\end{abstract}
\section{Introduction}
Let $p>2$ be a prime, and $\mathbb{F}_p$ be the finite field of order $p$. We denote the set of non-zero elements in $\mathbb{F}_p$ by $\mathbb{F}_p^*$. We say that a $k$-variable function $f(x_1,\ldots, x_k)$ is an \emph{expander} if there are $\alpha>1,~\beta>0$ such that for any sets $A_1,\ldots,A_k\subset\mathbb{F}_p$ of size $N\ll p^\beta,$ 
\[|f(A_1\times \cdots \times A_k)|\gg N^\alpha.\]
We write $X\gg Y$ if $X\ge CY$ for some positive constant $C$.

As far as we know, there are few known results on two-variable expanders. For example, it has been shown by Yazici, Murphy, Rudnev, and Shkredov \cite{AMRS} that the polynomial $f(x, y)=x+y^2$ is an expander. More precisely, they proved that if $A\subset \mathbb{F}_p$ with $|A|\le p^{5/8}$, then 
\[|A+A^2|\gg |A|^{11/10}.\]
The authors in \cite{AMRS} also indicated that the polynomial $f(x, y)=x(y+1)$ is an expander. In particular, they established that $|A\cdot (A+1)|\gg |A|^{9/8}$.

These exponents have been improved in recent works. For instance, Stevens and de Zeeuw \cite{SZ} showed that $|A\cdot (A+1)|\gg |A|^{6/5},$ and Pham, Vinh, and de Zeeuw \cite{P} proved that $|A+A^2|\gg |A|^{6/5}.$

Another expander in two variables has been investigated by Bourgain \cite{bo}. He proved that if $A, B\subset \mathbb{F}_p$ with $|A|=|B|=N=p^{\epsilon}$, $\epsilon>0$, and $f(x, y)=x^2+xy$, then $|f(A, B)|\gg N^{1+\delta},$ for some $\delta>0$. An explicit exponent was given by Stevens and de Zeeuw \cite{SZ}, namely, they proved that $|f(A, B)|\gg N^{5/4}$ for $N\le p^{2/3}$. We refer the reader to \cite{bukk, hlc, tao} and references therein for two-variable expanders in large sets over arbitrary finite fields. 

For three-variable expanders, there are several results which have been proved in recent years. Roche-Newton, Rudnev, and Shkredov \cite{RRS} proved that 
\begin{equation}\label{eqqqq:1}|A\cdot (A+A)|\gg |A|^{3/2}, ~~ |A+A\cdot A|\gg |A|^{3/2},\end{equation}
when $|A|\le p^{2/3}$.

In \cite{P}, Pham, Vinh and de Zeeuw  obtained a more general result. More precisely, they showed that for $A, B, C\subset \mathbb{F}_p$ with $|A|=|B|=|C|=N\le p^{2/3}$, and  for any quadratic polynomial in three variables $f(x, y, z)\in \mathbb{F}_p[x, y, z]$ which is not of the form $g(h(x)+k(y)+l(z))$, we have 
\begin{equation}\label{eqqqq:2}|f(A, B, C)|\gg N^{3/2}.\end{equation}
We notice that one can use the inequalities (\ref{eqqqq:1}) and (\ref{eqqqq:2}) to obtain some results on expanders in four variables. To see this, observe that  the following estimates follow directly from (\ref{eqqqq:1}) and (\ref{eqqqq:2}):
\[|(A-A)\cdot (A-A)|\gg |A|^{3/2}, ~ |A\cdot A+A\cdot A|\gg |A|^{3/2}, ~|(A-A)^2+(A-A)^2|\gg |A|^{3/2}.\]
A stronger version of the last inequality can be found in \cite{khoangcach}.  We refer the reader to \cite{mm} for a recent improvement on the size of $(A-A)\cdot (A-A)$.

In this note, we extend the methods from \cite{AMRS, RRS, P} to study different expanders in four variables over $\mathbb{F}_p$. 

Yazici et al. \cite{AMRS} proved that if $A\subset \mathbb{F}_p$ with $|A|\le p^{7/12}$, then the number of distinct cubic distances is at least $|A|^{36/35}$. Our first theorem is an improvement of this result.
\begin{theorem}\label{co0}
Let $A\subset \mathbb{F}_p$ with $|A|\le p^{7/12}$. Then we have
\[|(A-A)^3+(A-A)^3|\gg  |A|^{8/7}.\]
\end{theorem}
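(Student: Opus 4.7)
The plan is a Cauchy--Schwarz reduction to an $8$-variable energy-type quantity, followed by a point--plane incidence estimate. The overall template mirrors the approaches of \cite{AMRS, P, RRS}, but deploys Rudnev's point--plane incidence bound in a way that improves on the exponent $36/35$ of \cite{AMRS}.

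Set $S := (A-A)^3 + (A-A)^3$ and, for $s \in \mathbb{F}_p$, define
\[ N(s) := \#\{(a,b,c,d) \in A^4 : (a-b)^3 + (c-d)^3 = s\}. \]
Since $\sum_s N(s) = |A|^4$, the Cauchy--Schwarz inequality gives $|A|^8 \le |S|\cdot T$, where $T := \sum_s N(s)^2$. Consequently, the theorem reduces to proving $T \ll |A|^{48/7}$, for then $|S| \gg |A|^{8-48/7} = |A|^{8/7}$. The quantity $T$ enumerates $8$-tuples $(a_1,b_1,c_1,d_1,a_2,b_2,c_2,d_2) \in A^8$ satisfying $(a_1-b_1)^3 + (c_1-d_1)^3 = (a_2-b_2)^3 + (c_2-d_2)^3$; rearranged and factored via $X^3 - Y^3 = (X-Y)(X^2+XY+Y^2)$, this becomes a multiplicative identity
\[ ((a_1-b_1)-(a_2-b_2))\cdot Q_1 \;=\; ((c_2-d_2)-(c_1-d_1))\cdot Q_2, \]
where $Q_1,Q_2$ are the relevant quadratic forms. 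The plan is to reinterpret this identity as a point--plane incidence problem in $\mathbb{F}_p^3$: after fixing a suitable pair of variables (e.g. $b_1$ and $d_2$), the remaining six variables split three--three, with one triple parameterizing a set of at most $|A|^3$ points and the other triple parameterizing a family of at most $|A|^3$ planes. Applying Rudnev's incidence theorem per fixed pair and summing over the $|A|^2$ choices of the pair yields the desired bound $T \ll |A|^{48/7}$. The hypothesis $|A| \le p^{7/12}$ ensures $|A|^3 \ll p^2$ (and in fact more room), so the main term in Rudnev's bound dominates the trivial $|\mathcal{P}||\Pi|/p$ term.

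The principal obstacle will be choosing the parameterization so that the resulting incidence is genuinely point--plane, rather than a point--cubic-surface incidence (for which no sharp bound is known over $\mathbb{F}_p$). A secondary technical point is to verify that the point set arising from the factorization has no more than $O(|A|^{1/2})$ collinear points, which is the threshold at which Rudnev's bound is sharp; this reduces to ruling out a few degenerate algebraic configurations, ultimately using the irreducibility of $X^2 + XY + Y^2$ over $\mathbb{F}_p$ (for $p > 2$, $p \ne 3$, with the case $p = 3$ handled separately). Once these are in place, the Cauchy--Schwarz $\to$ factorization $\to$ Rudnev chain delivers the exponent $8/7$.
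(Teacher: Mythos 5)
The Cauchy--Schwarz reduction is fine: with $T=\sum_s N(s)^2$, the bound $|A|^8\le |S|\cdot T$ is correct, and $T\ll |A|^{48/7}$ is indeed equivalent to the statement. The gap is in the claimed way of bounding $T$. After fixing two of the eight variables, say $b_1$ and $d_2$, the remaining equation $(a_1-b_1)^3+(c_1-d_1)^3=(a_2-b_2)^3+(c_2-d_2)^3$ has the form $F(a_1,c_1,d_1)=G(a_2,b_2,c_2)$, i.e.\ it contains \emph{no cross terms} between the two triples. Such a ``level set'' relation is not a point--plane incidence (a plane incidence must be affine-linear in both the point coordinates and the plane coefficients, hence carry genuine bilinear cross terms), and the factorization $X^3-Y^3=(X-Y)(X^2+XY+Y^2)$ does not create any: the quadratic factor $X^2+XY+Y^2$ is intrinsically quadratic, not a bilinear pairing of two coordinate triples. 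So the proposed per-pair Rudnev application has no concrete realization, and the argument does not close. (Two smaller issues: for $|\RR|\sim|A|^3$ the collinearity threshold in Theorem~\ref{thm:rudnev} is $k\ll|\RR|^{1/2}\sim|A|^{3/2}$, not $|A|^{1/2}$; and $X^2+XY+Y^2$ is reducible over $\mathbb{F}_p$ whenever $p\equiv1\pmod 3$, so the irreducibility step would not be available in general.)

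The paper's actual route is structurally different and is precisely designed to manufacture the missing bilinear structure. Instead of attacking the $8$-variable energy $T$ directly, it proves Lemma~\ref{thm2*}, a lower bound for $|\{(b-a)^3+a^3+x:a,b\in A,\ x\in X\}|$, using the algebraic identity $(b-a)^3+a^3=3b\bigl((a-b/2)^2+b^2/12\bigr)=3bt^2+b^3/4$ with $t=a-b/2$. The term $3bt^2$ is a genuine bilinear cross term between the ``point'' coordinate $t^2$ and the ``plane'' coefficient $b$, and this is exactly what makes the Rudnev setup with $\RR=\{(t^2,b',-b'^3/4+x)\}$ and $\S=\{3bX-3t'^2Y+Z=-b^3/4+x'\}$ legitimate. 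The pass from this (essentially three-variable, with auxiliary set $X$) estimate to the four-variable statement $(A-A)^3+(A-A)^3$ is then done by translating so that $0\in A$, choosing $X\subset(A-A)^3$ with $|X|=\Theta(|A-A|)$ via Pl\"unnecke--Ruzsa, and balancing against $|A-A|$. Your proposal is missing both the identity that linearizes the cubic distance and the Pl\"unnecke--Ruzsa lifting step; without them the Rudnev application has nothing to bite on.
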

In our next two theorems, we provide two more expanders in four variables.
\begin{theorem}\label{mu4}
Let $A$ be a set in $\mathbb{F}_p$ with $|A|\le p^{5/8}$, $f(x)\in \mathbb{F}_p[x]$ be a quadratic polynomial, and $g(x, y)\in \mathbb{F}_p[x, y]$ be  a quadratic polynomial with a non-zero $xy$-term. Then we have
\[ |f(A)+A+g(A, A)|\gg |A|^{8/5}. \]
\end{theorem}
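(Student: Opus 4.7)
The plan is to adapt the Cauchy-Schwarz plus point-plane incidence framework developed by Pham-Vinh-de Zeeuw \cite{P} and by Roche-Newton-Rudnev-Shkredov \cite{RRS} in the context of three-variable expanders. Let $N = |A|$ and $T = |f(A) + A + g(A,A)|$. For each $s$, put $r(s) = \#\{(x,y,z,w) \in A^4 : f(x) + y + g(z,w) = s\}$, so that $\sum_s r(s) = N^4$. By Cauchy-Schwarz, $T \ge N^8/E$, where
\[ E = \sum_s r(s)^2 = \#\left\{(x_i, y_i, z_i, w_i)_{i=1,2} \in A^8 : f(x_1) + y_1 + g(z_1, w_1) = f(x_2) + y_2 + g(z_2, w_2)\right\}. \]
Hence the claim $T \gg N^{8/5}$ reduces to the energy estimate $E \ll N^{32/5}$. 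Note that the hypothesis $|A| \le p^{5/8}$ can be rewritten as $N^{16/5} \le p^2$, which matches the regime where Rudnev's point-plane theorem yields a non-trivial bound.

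To bound $E$, I would decouple the $(x,y)$-part from the $(z,w)$-part. Define
\[ M(t) = \#\{(z_1, w_1, z_2, w_2) \in A^4 : g(z_1, w_1) - g(z_2, w_2) = t\}, \quad R(t) = \#\{(x_1, x_2, y_1, y_2) \in A^4 : f(x_2) - f(x_1) + y_2 - y_1 = t\}, \]
so that $E = \sum_t M(t) R(t)$. Applying Cauchy-Schwarz in $t$ yields $E \le \bigl(\sum_t M(t)^2\bigr)^{1/2}\bigl(\sum_t R(t)^2\bigr)^{1/2}$, reducing the task to bounding each second moment by $N^{32/5}$. For $\sum_t R(t)^2$, the quadratic identity $f(x_i) - f(x_j) = (x_i - x_j)(a(x_i + x_j) + b)$ (from $f(x) = ax^2 + bx + c$) linearizes the defining relation, so the resulting 8-variable count can be recast as an incidence problem between $\sim N^3$ points and $\sim N^3$ planes in $\Fp^3$; Rudnev's theorem then yields the desired bound. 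For $\sum_t M(t)^2$, the decomposition $g(z, w) = \alpha zw + G(z) + H(w) + c$ (valid for any quadratic $g$; $\alpha \ne 0$ by the $xy$-term hypothesis) exposes the bilinear core $\alpha(z_1 w_1 - z_2 w_2)$, to which the same point-plane argument applies, with the univariate corrections $G$ and $H$ absorbed into the plane parameters.

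The main obstacle is verifying that, in each of the two point-plane setups, the principal term in Rudnev's bound dominates the collinearity error term $k|\Pi|$, where $k$ is the maximum number of collinear points in the configuration. This forces one to check that no line in $\Fp^3$ contains too many points of the relevant configuration, a condition whose verification is sensitive to the quadratic coefficients of $f$ and $g$; the threshold $|A| \le p^{5/8}$ is precisely what allows this verification to go through. A secondary subtlety is the treatment of low-dimensional fibers (e.g., diagonal contributions with $x_1 = x_2$ or $z_1 = z_2$), which produce lower-order terms handled by straightforward separate arguments. Once the two moment bounds are in place, the assertion $T \gg N^{8/5}$ follows immediately from the chain $T \ge N^8/E \gg N^{8 - 32/5} = N^{8/5}$.
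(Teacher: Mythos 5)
Your proposal takes a fundamentally different route from the paper, and it has a genuine gap. The paper proves the result in two stages, each of which uses only a \emph{six}-variable energy count: Lemma~\ref{buc1} gives $|f(A)+A|\gg |A|^{6/5}$, and then applying Lemma~\ref{mot} with $X=f(A)+A$ gives $|g(A,A)+X|\gg\min\{|A||X|^{1/2},p\}\gg |A|^{8/5}$ (the hypothesis $|A|\le p^{5/8}$ is exactly what ensures $|A|^{8/5}\le p$ and what makes the point count in Lemma~\ref{buc1} satisfy $|\RR|\ll p^2$). You instead go directly for the full eight-variable energy $E$ of $f(x)+y+g(z,w)$, and that is where the problem lies.

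After your Cauchy--Schwarz decoupling, the quantities $\sum_t M(t)^2$ and $\sum_t R(t)^2$ are themselves eight-variable counts (seven degrees of freedom once the single constraint is imposed). You claim each ``can be recast as an incidence problem between $\sim N^3$ points and $\sim N^3$ planes,'' but such a configuration only carries six degrees of freedom, so the incidence count $I(\RR,\S)\ll N^{9/2}$ cannot equal the eight-variable count: the surplus variables contribute an extra multiplicative factor of roughly $N^2$ (or $N$ if one is cleverer), yielding a bound on the order of $N^{13/2}$ rather than the required $N^{32/5}$. Using $N^4$ points and $N^4$ planes instead fixes the degree count but requires $N^4\ll p^2$, i.e.\ $N\ll p^{1/2}$, which is strictly more restrictive than $p^{5/8}$. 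More decisively, $\sum_t M(t)^2$ is exactly the number of solutions to $g(z_1,w_1)+g(z_4,w_4)=g(z_2,w_2)+g(z_3,w_3)$ with all arguments in $A$, so by Cauchy--Schwarz the bound $\sum_t M(t)^2\ll N^{32/5}$ would imply $|g(A,A)+g(A,A)|\gg N^{8/5}$. For $g(x,y)=xy$ this asserts $|A\cdot A+A\cdot A|\gg |A|^{8/5}$, strictly stronger than the Roche-Newton--Rudnev--Shkredov bound $|A\cdot A+A\cdot A|\gg|A|^{3/2}$ quoted in the paper's introduction, and not something you can take as known or derive by the hand-wave ``the same point-plane argument applies.'' (Also, the $p^{5/8}$ threshold does not come from the collinearity check as you suggest, but from $|\RR|\ll p^2$ and from $|A|^{8/5}\le p$.) The fix is the paper's iterative two-lemma scheme, which keeps every energy count down to six variables where Rudnev's theorem is directly applicable.
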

\begin{theorem}\label{mu5}
Let $1\le N\le p^{4/7}$ be an integer,  $h$ be a generator of $\mathbb{F}_p^*$, and $g(x, y)\in \mathbb{F}_p[x, y]$ be  a quadratic polynomial with a non-zero $xy$-term.  Then we have
\[\left\vert \{h^x+h^y+g(z, t)\colon 1\le x, y, z,t \le N\}\right\vert \gg N^{7/4}.\]
\end{theorem}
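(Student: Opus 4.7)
\emph{Approach.} The plan is to run the Cauchy--Schwarz-plus-energy argument familiar from \cite{AMRS, RRS, P}, exploiting simultaneously the multiplicative structure of $R = \{h^x : 1 \le x \le N\}$ and the bilinear structure of $g$. Let $T$ denote the set in the statement, and write $r(\tau) = \#\{(x,y,z,t) \in [N]^4 : h^x + h^y + g(z,t) = \tau\}$, so that $\sum_\tau r(\tau) = N^4$. A single Cauchy--Schwarz on this identity gives $|T| \ge N^8/Q$, where
\[
Q \;=\; \#\bigl\{(x_i,y_i,z_i,t_i)_{i=1,2} \in [N]^8 : h^{x_1}+h^{y_1}+g(z_1,t_1) = h^{x_2}+h^{y_2}+g(z_2,t_2)\bigr\}.
\]
It therefore suffices to prove $Q \ll N^{25/4}$.

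\emph{Key steps.} Using the nonzero $xy$-coefficient of $g$, an affine substitution reduces $g$ to the prototypical bilinear form $g(z,t) = zt$ up to a constant, and the $Q$-equation becomes $z_1 t_1 - z_2 t_2 = h^{x_2} + h^{y_2} - h^{x_1} - h^{y_1}$. Setting
\[
n_g(c) = \#\bigl\{(z_i,t_i) \in [N]^4 : z_1 t_1 - z_2 t_2 = c\bigr\}, \quad r_H(c) = \#\bigl\{(x_i,y_i) \in [N]^4 : h^{x_2}+h^{y_2}-h^{x_1}-h^{y_1} = c\bigr\},
\]
we have $Q = \sum_c n_g(c)\, r_H(c)$, and a second Cauchy--Schwarz reduces matters to bounding $\sum_c n_g(c)^2$ and $\sum_c r_H(c)^2$, each by $N^{25/4}$. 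By Plancherel, both sums are Fourier moments: $\sum_c r_H(c)^2 = p^{-1}\sum_\xi |\widehat{1_R}(\xi)|^8$ and $\sum_c n_g(c)^2 = p^{-1}\sum_\xi |\widehat{\tau_N}(\xi)|^4$, where $\tau_N(v) = \#\{(z,t) \in [N]^2 : zt = v\}$ and $\widehat{1_R}(\xi) = \sum_{x=1}^N e_p(\xi h^x)$. For the first moment, the standard completion trick applied to the incomplete sum $\widehat{1_R}(\xi)$ together with the Gauss-sum estimate $|G(\chi_k, \psi_\xi)| = \sqrt{p}$ gives $|\widehat{1_R}(\xi)| \ll \sqrt{p}\,\log p$ for $\xi \ne 0$; splitting into the $\xi = 0$ and $\xi \ne 0$ contributions yields the bound $N^8/p + p^3 N (\log p)^{O(1)}$, which equals $N^{25/4}$ exactly at the threshold $N = p^{4/7}$. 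For the second moment, an analogous Kloosterman-type estimate for $\widehat{\tau_N}(\xi)$ (equivalently, control of the multiplicative energy of $[N]$ in $\mathbb{F}_p$) produces the same bound $N^{25/4}$ in the same range.

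\emph{Main obstacle.} The decisive point is the sharp Gauss-sum-driven control of the exponential sum $\widehat{1_R}$, which is exactly what forces the threshold $N \le p^{4/7}$: this is the crossover between the diagonal term $N^8/p$ (from $\xi = 0$) and the off-diagonal contribution $p^3 N$ coming from the bound $|\widehat{1_R}(\xi)| \ll \sqrt{p}$. The nondegeneracy hypothesis on $g$ is crucial for the affine reduction to $g(z,t) = zt$ and, hence, for the clean convolutional structure underlying the Plancherel expansion of $\sum_c n_g(c)^2$; without it, the bilinear and multiplicative pieces would not decouple. Once both Fourier moments are controlled by $N^{25/4}$, assembling them gives $Q \ll N^{25/4}$ and finally $|T| \gg N^{7/4}$.
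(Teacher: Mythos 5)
Your approach is fundamentally different from the paper's, and it contains a gap that makes it fail precisely in the regime the theorem is about. The paper proves Theorem \ref{mu5} by composing two lemmas that both rest on Rudnev's point-plane incidence bound (Theorem \ref{thm:rudnev}): Lemma \ref{mu1} gives $|\{h^x+h^y : 1\le x,y\le N\}|\gg N^{3/2}$ for $N\le p^{2/3}$, and then Lemma \ref{mot}, applied with $A=\{1,\dots,N\}$ and $X=\{h^x+h^y\}$, gives $|g(A,A)+X|\gg\min\{N\cdot N^{3/4},p\}=N^{7/4}$ once $N\le p^{4/7}$. The crucial feature of Rudnev's theorem is that it is a purely combinatorial incidence bound, so it gives nontrivial information for arbitrarily small sets.

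Your Fourier route does not have this feature. The bound $|\widehat{1_R}(\xi)|\ll\sqrt{p}\log p$ is saturating and useless when $N\ll\sqrt{p}$, and your computation reflects this: you obtain
$\sum_c r_H(c)^2\ll N^8/p + p^3 N(\log p)^{O(1)}$, and these two terms are both $\asymp N^{25/4}$ only at $N=p^{4/7}$. For $N<p^{4/7}$ the off-diagonal term $p^3N$ strictly dominates $N^{25/4}$, so $Q\ll N^{25/4}$ fails and you do not recover $|T|\gg N^{7/4}$. Concretely, at $N=p^{1/2}$ your bound gives $|T|\gg N^8/(p^3N)=N^7/p^3=N$, which is trivial, whereas the theorem asserts $|T|\gg N^{7/4}$. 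The theorem is stated for all $1\le N\le p^{4/7}$, not only at the endpoint, so this is a genuine breakdown of the argument rather than a loss of a log factor. A Gauss-sum/Kloosterman-sum argument intrinsically cannot reach the small-set regime; that is exactly what the Rudnev incidence machinery in the paper is designed to handle.

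There is also a secondary issue worth flagging: you claim that an affine substitution reduces a quadratic $g$ with nonzero $xy$-coefficient to the hyperbolic form $zt$. Over $\mathbb{F}_p$ this is only true when the discriminant of the quadratic part of $g$ is a square; when it is a non-square the form is anisotropic and not affinely equivalent to $zt$. The paper's Lemma \ref{mot} handles an arbitrary quadratic with nonzero $xy$-term directly, so no such reduction is required there.
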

\bigskip
Different families of expanders with superquadratic growth have been studied in recent literature. For instance, Balog, Roche-Newton and Zhelezov \cite{balog} showed that for any $A\subset \mathbb{R}$ we have $|(A-A)\cdot(A-A)\cdot(A-A)|\gg |A|^{2+\frac{1}{8}}/\log^{\frac{17}{16}}|A|$. Murphy, Roche-Newton and Shkredov \cite{mur} proved that for $A\subset \mathbb{R}$ we have $|(A+A+A+A)^2+\log A|\gg |A|^2/\log |A|$. In the following theorem, we obtain two more expanders in five variables with quadratic growth.
\begin{theorem}\label{mu6}
Let $\mathbb{F}_p$ be a prime field of order $p$. Suppose that $h$ is a generator of $\mathbb{F}_p^*$, and $1\le N\le p^{1/2}$ is an integer. Then the following two statements hold:
\begin{enumerate}
\item $\left\vert \left\lbrace h^x(h^y+h^z+h^t+h^v)\colon 1\le x, y, z, t, v\le N \right\rbrace\right\vert\gg N^{2},$
\item $\left\vert \left\lbrace x(h^y+h^z+h^t+h^v)\colon 1\le x, y, z, t, v\le N \right\rbrace\right\vert\gg N^{2}.$
\end{enumerate}
\end{theorem}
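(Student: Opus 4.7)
The plan is to reduce both parts of the theorem to a single four-variable sumset estimate and then to establish it via a point–plane incidence argument. Set $A=\{h,h^{2},\dots,h^{N}\}\subset\Fp^{*}$ and $4A:=A+A+A+A$. Specializing $x=1$ in part~(1) shows that the image contains $h\cdot 4A$, which has the same cardinality as $4A$; the analogous choice in part~(2) shows that the image contains $4A$ itself. Since multiplication by a non-zero element is a bijection, in both cases the image has cardinality at least $|4A|$, so the theorem reduces to the inequality
\[
|4A|\gg N^{2}\qquad\text{for }A\text{ a geometric progression of size }N\le p^{1/2}.
\]

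To prove this four-variable estimate I would use Rudnev's point–plane incidence theorem in $\Fp^{3}$. By Cauchy--Schwarz, the bound $|4A|\gg N^{2}$ is equivalent to the fourth-moment additive-energy inequality
\[
E_{4}^{+}(A):=\big|\{(a_1,\dots,a_4,a_1',\dots,a_4')\in A^{8}:a_1+a_2+a_3+a_4=a_1'+a_2'+a_3'+a_4'\}\big|\ll N^{6}.
\]
Each solution of the displayed equation corresponds to an incidence between the point $(a_1,a_2,a_3)\in A^{3}\subset\Fp^{3}$ and the plane $X+Y+Z=a_1'+a_2'+a_3'+a_4'-a_4$. In the regime $N\le p^{1/2}$, Rudnev's theorem sits comfortably within its non-trivial range, and should yield the target energy bound after some Cauchy--Schwarz bookkeeping.

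The main technical difficulty will be handling the collinearity in $A^{3}$: every axis-parallel line through a fixed pair $(c_2,c_3)\in A\times A$ contains $N$ points of $A^{3}$, and a direct application of Rudnev's theorem with collinearity parameter $k=N$ yields only the weaker estimate $|4A|\gg N^{3/2}$. To reach the target $N^{2}$ one has to either peel off the contribution of these axis-parallel lines---using the multiplicative structure of $A$, which prevents long affine progressions from sitting inside it---or reorganize the point and plane configuration so that the maximum number of collinear points drops to $o(N)$. Once this refinement is carried through, the initial reduction delivers both parts of the theorem at once.
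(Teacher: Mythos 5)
Your reduction discards exactly the structure the paper exploits, and the piece you keep is (at best) unproven and quite possibly harder than the original problem. Specializing $x=1$ shows that both images contain a copy of $4A$, so the theorem would indeed follow from $|4A|\gg N^2$ for a geometric progression $A$ of size $N\le p^{1/2}$ — but the paper never asserts this four-variable bound, and it is not in the toolbox of Lemma \ref{ba} or Lemma \ref{mu1}. The whole point of Theorem \ref{mu6} is that the \emph{fifth} variable, the multiplier $h^x$ (respectively $x$), supplies multiplicative structure that can be combined with the additive growth of $h^y+h^z$. Concretely, the paper's route is: Lemma \ref{mu1} (itself a consequence of Lemma \ref{mot}) gives $|\{h^y+h^z\colon 1\le y,z\le N\}|\gg N^{3/2}$, and Lemma \ref{ba}, the product–difference expansion $|X\cdot(A-A)|\gg\min\{|A||X|^{1/2},p\}$ from \cite{AMRS}, is then used with the size-$N$ multiplier set $X$ (either $\{h^x\}$ or $\{1,\dots,N\}$) and a set $A$ of size $\gg N^{3/2}$, giving $N^{3/2}\cdot N^{1/2}=N^2$ in the range $N\le p^{1/2}$. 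Nothing in that argument produces, or needs, a lower bound on $|4A|$ alone.

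Beyond the strategic mismatch, the incidence step you sketch does not work. If the points are $(a_1,a_2,a_3)\in A^3$ and the planes are $X+Y+Z=s$, then \emph{all} the planes are parallel with common normal $(1,1,1)$; each point lies on at most one such plane, so the incidence count is just $\sum_s r_{3A}(s)\,w(s)$ for some weights $w$, and Rudnev's theorem contributes nothing geometric. Your discussion of the collinearity parameter $k=N$ is also off: axis-parallel lines are not contained in any plane with normal $(1,1,1)$, so they are irrelevant; the actual obstruction is that the plane family is degenerate. You correctly sense that the naive application only gives $N^{3/2}$, and you flag that ``one has to either peel off the contribution of these lines... or reorganize the point and plane configuration,'' but you do not carry out either fix — and it is precisely at this point that the paper does something different (it keeps the extra multiplicative variable rather than trying to squeeze $N^2$ out of $4A$). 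As written, the proposal leaves the key inequality unproved, and the reduction on which it rests replaces the stated problem with one that the paper's own lemmas do not resolve.
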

\bigskip
For $A\subset \mathbb{F}_p$, the \emph{sumset} of $A$ is the set $A + A = \{ a + b : a ,b \in A \}$,
and the \emph{product set} of $A$ is the set $A \cdot A = \{ a \cdot b : a , b \in A \}$. In $2004$, Bourgain, Katz, and Tao \cite{bkt} proved that if $p^{ \delta } < |A| < p^{1- \delta}$ 
where $0 < \delta < 1/2$, then 
\begin{equation}\label{eq:bkt}
\max \{ |A + A| , |A \cdot A| \} \geq c  |A|^{1 + \epsilon }
\end{equation}
for some positive constants $c$ and $\epsilon$ depending only on $\delta$.  Hart, Iosevich, and Solymosi \cite{his} obtained bounds that give an explicit dependence of $\epsilon$ on $\delta$.  
In \cite{his}, it is shown that if 
$|A + A| = m$ and $|A \cdot A | = n$, then 
\begin{equation}\label{eq:his}
|A|^3 \leq \frac{ c m^2 n |A| }{ p} + c p^{1/2} mn
\end{equation}
where $c$ is some positive constant.  Inequality (\ref{eq:his}) implies a non-trival sum-product estimate when 
$p^{1/2} \ll |A| \ll p $. Vinh \cite{vinh} and Garaev \cite{garaev} improved the inequality (\ref{eq:his}) and as a result, obtained a better sum-product estimate.     

\begin{theorem}[\cite{vinh}]\label{vinh th}
For  $A \subset \mathbb{F}_p$, suppose that $|A+A| = m$, and $|A \cdot A| = n$, then 
\[
|A|^2 \leq \frac{mn|A|}{p} + p^{1/2} \sqrt{ m n } .
\]
\end{theorem}

\begin{corollary}[\cite{vinh}]\label{vinh cor}
For $A \subset \mathbb{F}_p$, then there is a positive constant $c$ such that the following hold.  
\begin{enumerate}
\item If $p^{1/2} \ll |A| < p^{2/3}$, then 
\[
\max \{ |A + A | , |A \cdot A | \} \geq \frac{ c |A|^2 }{p^{1/2}} .
\]
\item If $p^{2/3} \leq |A| \ll p$, then 
\[
\max \{ |A + A | , |A \cdot A | \} \geq  c ( p |A| )^{1/2} .
\]
\end{enumerate}
\end{corollary}
A more general statement  of Corollary \ref{vinh cor} has been established by Vu \cite{vu}. Before presenting his result, we need the following definition. 
 \begin{definition} \label{def:deg} A polynomial $f(x, y) \in \mathbb{F}_p[x, y]$ is {\it degenerate}
 if it is  of the form
$Q(L(x, y))$ where $Q$ is an one-variable polynomial and $L$ is
a linear form in $x$ and  $y$. \end{definition}
Vu \cite{vu} proved the following theorem.
\begin{theorem}[\cite{vu}] \label{theorem:1} Let
$f(x, y)$  be a non-degenerate polynomial of degree $d$ in
$\mathbb{F}_p[x, y]$. Then for any $A \subset \mathbb{F}_p$, we have

$$\max \left\lbrace |A+A|, |f(A, A)| \right\rbrace \gg \min \left\lbrace \frac{|A|^{3/2}}{dp^{1/4}},  \frac{p^{1/3}|A|^{2/3}}{d^{1/3}} \right\rbrace.
$$
\end{theorem}
We note that, in the case $f(x, y)=xy$, the lower bounds of Theorem \ref{theorem:1} are weaker than those of Corollary \ref{vinh cor}. Theorem \ref{theorem:1} is only non-trivial when $|A|\gg p^{1/2}$, and Theorem \ref{theorem:1} also holds over arbitrary finite fields $\mathbb{F}_q$ with $q$ is a prime power. The reader can find a version of Theorem \ref{theorem:1} over the real numbers in \cite{shen}. When $|A|\le \sqrt{p}$ and $f(x, y)$ is a non-degenerate quadratic polynomial, Bukh and Tsimerman \cite{bukk} obtained the following improvement.
\begin{theorem}[\cite{bukk}]
Let $f(x, y)\in \mathbb{F}_p[x, y]$ be a  non-degenerate quadratic polynomial.  For any $A\subset \mathbb{F}_p$ with $|A|\le \sqrt{p}$, we have
\begin{equation}\label{eqqq!}\max\{|A+A|, |f(A, A)|\}\gg |A|^{1+\epsilon},\end{equation}
for some $\epsilon>0$.
\end{theorem}
There are several progresses on finding explicit exponents of the inequality (\ref{eq:bkt}) for small sets over recent years, and the best lower bound was given by Roche-Newton, Rudnev, and Shkredov \cite{RRS}. More precisely, they showed that for $A\subset \mathbb{F}_p$ with $|A|\le p^{5/8}$,  the sum set and the product set satisfy
\[\max\left\lbrace |A+ A|, |A\cdot A|\right\rbrace \gg |A|^{6/5}.\]
In this paper, we give an explicit exponent of the inequality (\ref{eqqq!}) as follows.
\begin{theorem}\label{maymay}
Let $f(x, y)\in \mathbb{F}_p[x,y]$ be a non-degenerate quadratic polynomial. Let $A$ be a set in $\mathbb{F}_p$ with $|A|\le p^{5/8}$, then we have 
\[\max\{|A+A|, |f(A, A)|\}\gg |A|^{6/5}.\]
\end{theorem}
\bigskip

The rest of this paper is organized as follows. In Section $2$, we mention main tools in our proofs. We give a proof of Theorem \ref{co0} in Section $3$. Proofs of Theorems \ref{mu4}, \ref{mu5}, and \ref{mu6} are given in Section $4$. In Section $5$ we will give a proof of Theorem \ref{maymay}, and a discussion on an improvement of Theorem \ref{theorem:1} for large sets.
\section{Tools}
The main tool in our proofs is a point-plane incidence bound due to Rudnev \cite{R}, but
we use a strengthened version of this theorem, proved by de Zeeuw in \cite{Z}. Let us first recall that if $\RR$ is a set of points in $\mathbb{F}_p^3$ and $\S$ is a set of planes in $\mathbb{F}_p^3$, then the number of incidences between $\RR$ and $\S$, denoted by $I(\RR, \S)$, is the cardinality of the set $\{(r,s)\in \RR\times \S : r\in s\}$.
\begin{theorem}[{\bf Rudnev}, \cite{R}]\label{thm:rudnev}
Let $\RR$ be a set of points in $\mathbb{F}_p^3$ and $\S$ be a set of planes in $\mathbb{F}_p^3$, with $|\RR|\leq |\S|$ and $|\RR|\ll p^2$.
Suppose that there is no line that contains $k$ points of $\RR$ and is contained in $k$ planes of $\S$.
Then
\[ \I(\RR,\S)\ll |\RR|^{1/2}|\S| +k|\S|.\]
\end{theorem}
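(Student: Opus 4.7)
The plan is a two-step reduction: first Cauchy--Schwarz collapses point-plane incidences onto a sum over lines in $\mathbb{F}_p^3$, then the Pl\"ucker embedding transfers that sum to an incidence problem on the Klein quadric $Q\subset\mathbb{F}_p^5$, where a characteristic-$p$ polynomial method controls the rich lines. Concretely, I would apply Cauchy--Schwarz to $T:=\sum_{\pi\in\S}|\RR\cap\pi|^2$, so that $\I(\RR,\S)^2\ll|\S|\cdot T$. The diagonal contribution to $T$ is $\I(\RR,\S)$ itself, and the off-diagonal part counts ordered triples $(r_1,r_2,\pi)$ with $r_1\ne r_2$ on $\pi$; since such a pair determines a unique line $\ell(r_1,r_2)$, this equals $\sum_{\ell} L_\ell(L_\ell-1)\,S_\ell$, with $L_\ell:=|\RR\cap\ell|$ and $S_\ell:=|\{\pi\in\S:\ell\subset\pi\}|$. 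The hypothesis of the theorem is exactly the assertion that $\min(L_\ell,S_\ell)<k$ for every line $\ell$.

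Next I would dyadically decompose the remaining sum by $L_\ell\sim\tau$ and $S_\ell\sim\sigma$, and bound the number of lines in each dyadic class. The heart of the argument is an estimate, in the spirit of Guth--Katz, for the number of lines in $\mathbb{F}_p^3$ that are $t$-rich with respect to a point set of size $n\ll p^2$. To obtain such an estimate I would Pl\"ucker-embed lines into $Q\subset\mathbb{F}_p^5$; incidence of two lines becomes vanishing of the associated symplectic pairing. Rich lines then become rich points on $Q$ with respect to a natural family of ``point-dual'' hyperplane sections, reducing the problem to a point-hyperplane incidence count on a fixed quadric.

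To close the loop on $Q$ I would run a polynomial-method argument adapted to positive characteristic: produce a low-degree polynomial vanishing on the rich subset of $Q$, intersect it with $Q$ via B\'ezout, and separate out the exceptional components, which by a Cayley--Salmon-type classification are ruled surfaces of bounded degree. The exceptional contribution is what the $k|\S|$ term in the bound absorbs, while the generic contribution, optimized dyadically in $\tau$ and $\sigma$ subject to $\min(\tau,\sigma)<k$, matches the $|\RR|^{1/2}|\S|$ term.

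The principal obstacle is the characteristic-$p$ adaptation in the last step. The Guth--Katz framework over $\mathbb{R}$ relies crucially on polynomial partitioning via the ham-sandwich theorem, which has no analogue over $\mathbb{F}_p$; in its place one must keep all polynomial degrees strictly below $p$ so that B\'ezout's theorem and the classification of ruled surfaces still apply without Frobenius-related pathologies. This is precisely why the hypothesis $|\RR|\ll p^2$ is needed: it forces the partitioning polynomial's degree to remain below $\sim p^{2/3}$, safely in the regime where intersection-theoretic counts on $Q$ behave as in characteristic zero. Overcoming this obstacle is the real content of the theorem; the Cauchy--Schwarz and Pl\"ucker steps above are essentially formal.
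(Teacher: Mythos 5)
The paper does not prove this theorem; it imports it as a black box, citing Rudnev \cite{R} for the original result and de Zeeuw \cite{Z} for the strengthened formulation with the $k$-rich-line hypothesis that is actually stated and used here. There is therefore no in-paper proof to compare your attempt against, and your sketch has to be judged against those external references.

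At the level of ingredients, your outline points at the right machinery: translating to lines, the Pl\"ucker embedding into the Klein quadric $Q\subset\mathbb{P}^5$, a Guth--Katz-flavoured polynomial argument, and the observation that $|\RR|\ll p^2$ is what keeps the relevant polynomial degrees below $p$ so that B\'ezout and the ruled-surface classification are usable in characteristic $p$. These are indeed the load-bearing ideas in \cite{R,Z}. But as written the argument does not close, for two reasons. First, after the opening Cauchy--Schwarz step $\I(\RR,\S)^2\ll|\S|\sum_{\pi}|\RR\cap\pi|^2$ and the identity $\sum_{\pi}|\RR\cap\pi|^2=\I(\RR,\S)+\sum_{\ell}L_\ell(L_\ell-1)S_\ell$, the hypothesis $\min(L_\ell,S_\ell)<k$ alone does not give $\sum_{\ell}L_\ell(L_\ell-1)S_\ell\ll|\RR||\S|+k^2|\S|$: splitting on whether $L_\ell<k$ or $S_\ell<k$ leaves sums such as $k\sum_\ell L_\ell(L_\ell-1)\le k|\RR|^2$ and $k\sum_\ell L_\ell S_\ell$ that are not controlled without a genuine quantitative bound on the number of lines that are simultaneously $\tau$-rich in $\RR$ and $\sigma$-rich in $\S$. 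That line-count estimate is the heart of the theorem and you never state it, so the dyadic optimization ``subject to $\min(\tau,\sigma)<k$'' is an intention, not a computation; in particular the assertions that the exceptional ruled-surface contribution ``is what the $k|\S|$ term absorbs'' and that the generic contribution ``matches $|\RR|^{1/2}|\S|$'' are conclusions, not steps. Second, the translation on $Q$ is slightly off: the set of lines through a fixed point, and the set of lines inside a fixed plane, are the $\alpha$- and $\beta$-planes contained in the Klein quadric, not generic hyperplane sections of $Q$, and keeping this straight is essential to identifying what the B\'ezout/Cayley--Salmon analysis must exclude. In short, the sketch is a reasonable roadmap to Rudnev's proof, but it does not constitute one, and the paper itself does not attempt it.
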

The following lemma is known as the Pl\"unnecke-Ruzsa inequality. A simple and elegant proof can be found in \cite{pepe}.
\begin{lemma}[{\bf Pl\"unnecke-Ruzsa}]\label{l:plunnecke-ruzsa}
Let  $A, B$ be finite subsets of an abelian group such that $|A+B|\le K|A|.$
Then, for an arbitrary $0<\delta<1$, there is a nonempty set $X\subset A$ such that
$|X| \ge (1-\delta) |A|$ and for any integer $k$ one has
\begin{equation}\label{f:plunnecke-ruzsa1'}
    |X+kB|\le \left(\frac{K}{\delta} \right)^{k}  |X| \,.
\end{equation}
\end{lemma}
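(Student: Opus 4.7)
The plan is to follow Petridis's short proof of Pl\"unnecke-Ruzsa and then augment it with a disjoint-covering iteration to force $X$ to be large. Petridis's proof produces \emph{some} non-empty $X\subset A$ with $|X+kB|\le K^k|X|$, but gives no control on $|X|$; I would harvest many such subsets by repeated extraction from the complement, paying a factor $1/\delta$ per pass, which explains the factor $(K/\delta)^k$ in the statement.

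For the core Petridis step, among all non-empty $Y\subset A$ I pick $X$ minimizing $K_0:=|Y+B|/|Y|$; testing $Y=A$ gives $K_0\le K$. I would then prove, by induction on $|C|$, the key inequality $|X+B+C|\le K_0\,|X+C|$ for every finite set $C$. The base case $|C|=1$ is immediate. For the inductive step with $C=C'\cup\{c\}$ and $c\notin C'$, introduce the overlap set $Z:=\{x\in X:x+c\in X+C'\}\subset X$; then $Z+B+c\subset X+B+C'$, the identity $|X+C|=|X+C'|+|X|-|Z|$ holds by inclusion-exclusion on $X+C'$ and $X+c$, and if $Z\neq\emptyset$ the minimality of $K_0$ gives $|Z+B|\ge K_0|Z|$. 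Combining these with the induction hypothesis for $C'$ yields the inequality. Applying this key inequality iteratively with $C=kB$ produces $|X+kB|\le K_0^k|X|\le K^k|X|$ for all $k\ge 1$.

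To force $|X|\ge(1-\delta)|A|$, I would iterate the above on the complement. Set $A_0:=A$; having obtained pairwise disjoint $X_0,\ldots,X_{i-1}\subset A$, let $A_i:=A\setminus(X_0\cup\cdots\cup X_{i-1})$. If $|A_i|<\delta|A|$, stop. Otherwise $|A_i+B|\le|A+B|\le K|A|\le(K/\delta)|A_i|$, so applying the Petridis step to $(A_i,B)$ with constant $K/\delta$ yields a non-empty $X_i\subset A_i$ with $|X_i+kB|\le(K/\delta)^k|X_i|$ for all $k\ge 1$. The procedure terminates since $A$ is finite and each $X_i$ is non-empty. Setting $X:=\bigcup_i X_i$ and using disjointness of the $X_i$ gives $|X|\ge(1-\delta)|A|$ and $|X+kB|\le\sum_i|X_i+kB|\le(K/\delta)^k|X|$, as required. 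The main obstacle is the inductive step in the key inequality: one must identify the correct overlap set $Z$ and verify the cardinality identity $|X+C|=|X+C'|+|X|-|Z|$ so that the minimality of $K_0$ can be invoked. The covering stage in the second half is routine bookkeeping once the Petridis bound is in hand.
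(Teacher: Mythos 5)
Your proof is correct and follows exactly the route the paper points to: the core is Petridis's minimizing-ratio argument from \cite{pepe} (with the overlap set $Z$, the identity $|X+C|=|X+C'|+|X|-|Z|$, and the minimality of $K_0$ invoked on $Z\subset A$), and the disjoint-covering iteration on $A_i=A\setminus(X_0\cup\cdots\cup X_{i-1})$, paying $|A_i+B|\le K|A|\le (K/\delta)|A_i|$ per pass while $|A_i|\ge\delta|A|$, is the standard way to upgrade it to the large-$X$ form with constant $(K/\delta)^k$. The paper itself gives no proof and merely cites \cite{pepe}, so there is nothing further to compare.
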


To prove Theorems \ref{mu4}--\ref{mu6}, we need the following two lemmas. The first one follows from a result of Pham, Vinh and de Zeeuw \cite{P}.

\begin{lemma}\label{mot}
Let $g(x, y)\in \mathbb{F}_p[x,y]$ be a quadratic polynomial with a non-zero $xy$-term.
Let $A,X\subset \F$ with $|A|\leq |X|$.
Then we have
\[|g(A, A)+X|\gg \min\left\{|A||X|^{1/2},p\right\}. \]
\end{lemma}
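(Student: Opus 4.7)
The plan is a standard Cauchy--Schwarz plus Rudnev point--plane incidence argument in the style of \cite{P}. Writing $N := |g(A,A) + X|$ and $r(s) := \#\{(a,b,x) \in A \times A \times X : g(a,b) + x = s\}$, I would apply Cauchy--Schwarz (using that $r$ is supported on a set of size $N$ with $\sum_s r(s) = |A|^2|X|$) to obtain
\[
E \;:=\; \sum_s r(s)^2 \;\geq\; \frac{|A|^4 |X|^2}{N},
\]
where $E$ counts sextuples $(a,b,x,a',b',x') \in (A \times A \times X)^2$ satisfying $g(a,b) + x = g(a',b') + x'$.

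The next step is to bound $E$ from above via Theorem \ref{thm:rudnev}. Since $g$ has a non-zero $xy$-term, write $g(x,y) = \alpha xy + \phi(x) + \psi(y)$ with $\alpha \neq 0$ (absorbing any constant into $X$). Rewriting the collision equation as
\[
(-\alpha b')\,a' + (\alpha a)\,b + \bigl(\psi(b) + x - \phi(a')\bigr) \;=\; -\phi(a) + \psi(b') + x',
\]
I would associate to each $(a',b,x) \in A \times A \times X$ the point $\bigl(a',\, b,\, \psi(b)+x-\phi(a')\bigr) \in \F^3$, and to each $(a,b',x') \in A \times A \times X$ the plane $(-\alpha b')U + (\alpha a)V + W = -\phi(a) + \psi(b') + x'$. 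Both correspondences are injective, giving $|\RR| = |\S| = |A|^2|X|$ and $I(\RR,\S) = E$.

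Assuming $|A|^2|X| \ll p^2$, Theorem \ref{thm:rudnev} would then yield $E \ll |A|^3|X|^{3/2} + k\,|A|^2|X|$. A case analysis on the direction vector $(d_1, d_2, d_3)$ of a candidate line $L$ shows that $L$ lies in at most $|A|$ of our planes: the coefficient constraint $-\alpha b' d_1 + \alpha a d_2 + d_3 = 0$ forces $b'$ to be an affine function of $a$ (or vice versa, or one of them to be fixed), after which the constant condition determines $x'$, producing a one-parameter family parametrised by $A$. Hence $k \leq |A|$, the first term dominates since $|X| \geq |A| \geq 1$, and $E \ll |A|^3|X|^{3/2}$. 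Combining with the lower bound on $E$ gives $N \gg |A||X|^{1/2}$. If instead $|A||X|^{1/2} \gg p$, I would restrict to $X' \subseteq X$ with $|X'| \sim p^2/|A|^2$ to satisfy the hypothesis $|\RR| \ll p^2$, apply the same argument to obtain $|g(A,A) + X'| \gg p$, and use $g(A,A) + X' \subseteq g(A,A) + X$ to recover the $p$ term. The main obstacle will be the line-richness bookkeeping: the $\phi$ and $\psi$ contributions make the case split messier than in the pure $g(x,y) = xy$ setting, but each subcase still reduces to a scalar equation fixing one of $a$ or $b'$, so the bound $k \leq |A|$ survives.
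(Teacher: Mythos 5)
Your argument is correct and is essentially the same energy–incidence method that the paper attributes to \cite{P}: lower-bound the collision count $E$ via Cauchy--Schwarz on the representation function of $g(A,A)+X$, linearize the cross term $\alpha xy$ in the collision equation so that three of the six variables index points of $\mathbb{F}_p^3$ and the other three index planes, and then apply Theorem~\ref{thm:rudnev} with the line-richness bound $k \le |A|$ obtained from the direction-vector case analysis. The reduction to a subset $X' \subset X$ when $|A|^2|X| \gg p^2$, used to recover the $\min\{\cdot,p\}$, is also the standard device; the paper itself does not reproduce a proof but your construction of $\RR$, $\S$ and the $k$-bound is exactly how the cited result is obtained.
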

The second lemma we use is due to Yazici et al. and was proved in \cite{AMRS}.
\begin{lemma}\label{ba}If $A, X\subset \mathbb{F}_p$ with $|X|\le |A|$, then
\[|X\cdot (A-A)|\gg \min\left\lbrace |A||X|^{1/2}, p\right\rbrace.\]
\end{lemma}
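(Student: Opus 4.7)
The plan is to combine a Cauchy--Schwarz step with Rudnev's point-plane incidence theorem (Theorem \ref{thm:rudnev}), following the standard multiplicative-energy framework used in \cite{RRS} and \cite{AMRS}. For each $s \in X \cdot (A-A)$ set
\[ r(s) = |\{(x, a, a') \in X \times A \times A : x(a-a') = s\}|. \]
Since $\sum_s r(s) = |X||A|^2$, Cauchy--Schwarz gives
\[ |X \cdot (A-A)| \ge \frac{|X|^2|A|^4}{N}, \qquad N := \sum_s r(s)^2 = |\{(x, y, a, a', b, b') \in X^2 \times A^4 : x(a-a') = y(b-b')\}|. \]
It therefore suffices to show $N \ll |X|^{3/2}|A|^3$ in the regime $|A||X|^{1/2} \ll p$.

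To realize $N$ as a point-plane incidence count, I discard $0$ from $X$ (at negligible cost) and set
\[ \RR = \{(y, yb, yb') : y \in X,\ b, b' \in A\} \subset \mathbb{F}_p^3. \]
The map $(y, b, b') \mapsto (y, yb, yb')$ is injective for $y \ne 0$, so $|\RR| = |X||A|^2$. For each $(x, a, a') \in X \times A \times A$, let $\pi_{x, a, a'}$ be the plane $U - V = x(a-a')$ in coordinates $(Y, U, V)$; these form a family $\S$ of $|X||A|^2$ planes (with multiplicity), and the identity $yb - yb' = U - V$ on $\pi_{x, a, a'}$ together with its defining equation shows $\I(\RR, \S) = N$.

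The key step will be verifying the line condition in Theorem \ref{thm:rudnev}. Distinct planes $U - V = c$ and $U - V = c'$ are parallel, so a line contained in several planes of $\S$ lies in multiple copies of a single plane $U - V = c_0$, and hence has direction vector of the form $(y_0, u_0, u_0)$. A short case analysis then shows such a line meets $\RR$ in at most $\max(|X|, |A|) = |A|$ points: if $y_0 \ne 0$, the $Y$-coordinate $y = Y_0 + ty_0$ determines the line parameter $t$, so $y \in X$ gives at most $|X|$ valid $t$; if $y_0 = 0$, then $y$ is fixed in $X$ and the condition that both $b$ and $b - c_0/y$ lie in $A$ yields at most $|A|$ valid $t$. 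I may therefore take $k = |A|$.

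Assuming $|X||A|^2 \ll p^2$, Theorem \ref{thm:rudnev} then yields
\[ N \ll (|X||A|^2)^{3/2} + |A|\cdot|X||A|^2 \ll |X|^{3/2}|A|^3, \]
and substituting back gives $|X \cdot (A-A)| \gg |X|^{1/2}|A|$. In the complementary regime $|X||A|^2 \gg p^2$, I would pass to a subset $X' \subseteq X$ of size $\asymp p^2/|A|^2$ and re-run the argument to extract the $p$-saturation appearing in the statement. The main obstacle is the line analysis of the third paragraph; everything else is routine.
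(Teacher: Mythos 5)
Your Cauchy--Schwarz reduction to the energy $N=\sum_s r(s)^2$ and the target bound $N\ll |X|^{3/2}|A|^3$ are both correct, but the incidence setup that is supposed to prove this bound is broken. Every plane in your family $\pi_{x,a,a'}$ has the equation $U-V=c$ with $c=x(a-a')$ and the coordinate $Y$ free, so all of these planes are parallel with common normal $(0,1,-1)$. As a \emph{set} of planes, which is what Theorem \ref{thm:rudnev} requires, $\S$ therefore has only $|X\cdot(A-A)|$ elements, not $|X||A|^2$; the hypothesis $|\RR|\le|\S|$ fails badly, and the honest (unweighted) incidence count $\I(\RR,\S)$ is just $|X||A|^2$ rather than $N$, since each point of $\RR$ lies on exactly one plane of the de-duplicated pencil. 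You flagged the multiplicity yourself (``$|X||A|^2$ planes (with multiplicity)''), but Rudnev's bound has no weighted version that survives this degeneration: a pencil of parallel planes carries no genuine three-dimensional incidence structure, and indeed for a single repeated plane the bound $|\RR|^{1/2}|\S|+k|\S|$ is far below the true incidence count $|\RR\cap\pi|\cdot|\S|$. So the line analysis, while correct as far as it goes, is applied to a configuration on which the theorem simply does not act.

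The repair is to choose a parametrization in which the planes are genuinely spread out. Writing the energy equation as $xa - yb + yb' = xa'$, take
\[ \RR=\bigl\{(a,\,yb,\,y): a\in A,\ y\in X\setminus\{0\},\ b\in A\bigr\},\qquad
\S=\bigl\{\,xU - V + b'W = xa'\ :\ x\in X\setminus\{0\},\ a'\in A,\ b'\in A\,\bigr\}. \]
Then $(a,yb,y)\in\pi_{x,a',b'}$ iff $x(a-a')=y(b-b')$, the map $(a,y,b)\mapsto(a,yb,y)$ is injective, and the plane $xU-V+b'W=xa'$ determines $(x,b',a')$, so $|\RR|=|\S|=|X\setminus\{0\}||A|^2\ll p^2$ in the regime $|A||X|^{1/2}\ll p$. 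The projection of $\RR$ onto the $(U,W)$-coordinates is $A\times(X\setminus\{0\})$, so non-vertical lines carry at most $\max\{|A|,|X|\}=|A|$ points of $\RR$; vertical lines (direction $(0,1,0)$) are in no plane of $\S$ since the $V$-coefficient $-1$ is nonzero. Thus $k=|A|$, and Theorem \ref{thm:rudnev} gives $N\ll(|A|^2|X|)^{3/2}+|A|\cdot|A|^2|X|\ll|A|^3|X|^{3/2}$, exactly the bound you need. The $p$-saturation step you sketch at the end (passing to $X'\subset X$ with $|X'||A|^2\asymp p^2$) is then the standard patch and is fine.
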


\section{Proof of Theorem \ref{co0}}
We need the following result in order to prove Theorem \ref{co0}. 
\begin{lemma}\label{thm2*}
Let $A, X\subset \mathbb{F}_p$ with $|A-A|^2|X|\le p^2$. Then
\[|\left\lbrace (b-a)^3+a^3+x\colon a, b\in A, x\in X\right\rbrace|\gg  \min \left\lbrace  \frac{|X|^{1/2}|A|^4}{|A-A|^3},  \frac{|X||A|^5}{|A-A|^4}\right\rbrace.\]
\end{lemma}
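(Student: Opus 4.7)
The plan is to prove the bound by the standard Cauchy–Schwarz/Rudnev paradigm. Let $D=A-A$, and for $w\in \mathbb{F}_p$ let $n(w)$ denote the number of triples $(a,b,x)\in A\times A\times X$ with $(b-a)^3+a^3+x=w$. Then $\sum_w n(w)=|A|^2|X|$, so by Cauchy–Schwarz
\[ \left|\{(b-a)^3+a^3+x:a,b\in A, x\in X\}\right|\ge \frac{|A|^4|X|^2}{E}, \]
where $E=\sum_w n(w)^2$. It therefore suffices to prove
\[ E\;\ll\; |D|^3|X|^{3/2}\;+\;\frac{|D|^4|X|}{|A|}, \]
since then $|W|\ge |A|^4|X|^2/E\gg \min\bigl(|X|^{1/2}|A|^4/|D|^3,\ |X||A|^5/|D|^4\bigr)$.

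Next I would pass to a slightly larger count. Setting $d_i=b_i-a_i\in D$, one has $E\le E'$ where $E'$ counts 6-tuples $(a_1,d_1,x_1,a_2,d_2,x_2)\in A^2\times D^2\times X^2$ with $d_1^3+a_1^3+x_1=d_2^3+a_2^3+x_2$ (we drop the constraint $a_i+d_i\in A$, which only enlarges the count). Using the identity $d_1^3-d_2^3=u(u^2+3v)$ with $u=d_1-d_2$, $v=d_1d_2$, and similarly $s=a_1-a_2$, $t=a_1a_2$, the equation rewrites as
\[ 3uv+3st+u^3+s^3+x_1-x_2=0. \]
Because the maps $(d_1,d_2)\mapsto(u,v)$ and $(a_1,a_2)\mapsto(s,t)$ are at most $2$-to-$1$ onto their images $V\subset\mathbb{F}_p^2$ and $T\subset\mathbb{F}_p^2$, we obtain $E'\le 4N$, where $N$ counts tuples $(u,v,s,t,x_1,x_2)\in V\times T\times X^2$ satisfying the displayed relation.

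Now I would realize $N$ as a point–plane incidence count in $\mathbb{F}_p^3$. Take as points $\mathcal R$ the triples $(v,t,x_1)$ and as planes $\mathcal S$ the family
\[ \Pi_{u,s,x_2}:\quad 3uV+3sT+X_1 \;=\; x_2-u^3-s^3, \]
parametrised by $(u,s,x_2)$. The point $(v,t,x_1)$ lies on $\Pi_{u,s,x_2}$ exactly when the above equation holds, so $N\le I(\mathcal R,\mathcal S)$. The hypothesis $|D|^2|X|\le p^2$ will guarantee $|\mathcal R|\ll p^2$, so Theorem \ref{thm:rudnev} applies (after swapping the roles of points and planes if necessary). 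The Rudnev bound
\[ I(\mathcal R,\mathcal S)\ll |\mathcal R|^{1/2}|\mathcal S|+k|\mathcal S| \]
then yields two contributions: with $|\mathcal R|,|\mathcal S|$ both of order $|D|^2|X|$ (after pruning by the $2$-to-$1$ projections from $V$ and $T$), the main term $|\mathcal R|^{1/2}|\mathcal S|$ gives $|D|^3|X|^{3/2}$, matching the first summand.

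The hardest step will be bounding the line parameter $k$. One must check that no line of $\mathbb{F}_p^3$ contains $k$ points of $\mathcal R$ and lies in $k$ planes of $\mathcal S$ unless $k\lesssim |D|^2/|A|$. A line in $\mathcal S$ that is simultaneously rich in $\mathcal R$ forces strong algebraic coincidences between the plane normals $(3u,3s,1)$ and the coordinates $(v=d_1d_2, t=a_1a_2, x_1)$, which translate into an upper bound on $k$ of the predicted shape through a direct geometric-algebraic analysis; this then produces the second summand $k|\mathcal S|\ll |D|^4|X|/|A|$. Combining the two terms, summing, and feeding the bound back into the Cauchy–Schwarz inequality yields the desired lower bound on $|W|$.
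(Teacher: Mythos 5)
Your overall strategy (Cauchy--Schwarz to reduce to bounding the energy $E$, then Rudnev's point--plane incidence theorem) is the same as the paper's, and the target bound $E\ll |D|^3|X|^{3/2}+|D|^4|X|/|A|$ is exactly what the paper proves. The step $E\le E'$ (drop the constraint $a_i+d_i\in A$) is also legitimate. However, the algebraic reduction you chose does not produce point and plane sets of the required size, and this is a genuine gap.

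Concretely, you rewrite $d_1^3-d_2^3=u^3+3uv$ with $u=d_1-d_2$, $v=d_1d_2$ (and likewise $s,t$), and then take points $(v,t,x_1)$ and planes $\Pi_{u,s,x_2}$. The issue is that the constraint $(u,v)\in V$ (resp.\ $(s,t)\in T$) couples a point coordinate to a plane coordinate, and such a coupling cannot survive in a point--plane incidence count: after dropping it, your point set is (a subset of) $\pi_2(V)\times\pi_2(T)\times X = (D\cdot D)\times (A\cdot A)\times X$, and your plane set is parametrised by $(D-D)\times(A-A)\times X$. Generically $|D\cdot D|=\Theta(|D|^2)$ and $|A\cdot A|=\Theta(|A|^2)$, so $|\mathcal R|$ can be as large as $|D|^2|A|^2|X|$ -- nowhere near the claimed $|D|^2|X|$. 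The ``pruning by the $2$-to-$1$ projections'' only bounds $|V|\le|D|^2$ and $|T|\le|A|^2$; it does nothing to control $|\pi_2(V)|$ or $|\pi_2(T)|$, which is what actually enters $|\mathcal R|$. Plugging the true sizes into Rudnev's bound gives an estimate far weaker than $|D|^3|X|^{3/2}$. You also leave the line-parameter bound $k\lesssim|D|^2/|A|$ entirely unproven, and in your configuration there is no obvious reason it should hold.

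The paper avoids both problems with a different identity. Completing the cube in the other direction, $(b-a)^3+a^3=3b\bigl((a-b/2)^2+b^2/12\bigr)=3bt^2+b^3/4$ with $t=a-b/2$, makes the cubic bilinear in the pair $(b,t^2)$. This lets one put $t$ and $b'$ on the point side and $b$ and $t'$ on the plane side with no cross-coupling: $\mathcal R=\{(t^2,b',-b'^3/4+x)\}$ and $\mathcal S$ a family of planes indexed by $(b,t',x')$. Then $|\mathcal R|,|\mathcal S|\ll|T||A||X|$ with $T=\{a-b/2: a,b\in A\}$, and Pl\"unnecke--Ruzsa gives $|T|\ll |A-A|^2/|A|$, hence $|\mathcal R|,|\mathcal S|\ll|A-A|^2|X|$. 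The line parameter then comes essentially for free from the product structure of the projection $\{t^2: t\in T\}\times A$, giving $k\le\max\{|A|,|T|\}\ll|A-A|^2/|A|$, which yields the second term. You would need to replace your symmetric-function substitution by an identity of this shape (separating the cube of the single variable $d=b-a$ multiplicatively) to make the argument go through.
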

\begin{proof}
First note that
\begin{align}\label{eq1}
(b-a)^3+a^3&=3b\left((a-b/2)^2+b^2/12\right)=3b(t^2+b^2/12)
\end{align}
where $t=a-b/2$.
Define $T=\{a-b/2\colon a, b\in A\}$, and let $E$ be the number of solutions of the following equation
\[(b-a)^3+a^3+x=(b'-a')^3+a'^3+x', ~~~a, a', b, b'\in A, x, x'\in X.\]
To bound $E$, we first define a set of points $\RR$ and a set of planes $\S$ as follows:
\[\mathcal{R}=\{(t^2, b', -b'^3/4+x)\colon t\in T, b'\in A, x\in X\}, \]
\[\mathcal{S}=\{3bX-3t'^2Y+Z=-b^3/4+x'\colon t'\in T, x'\in X, b\in A\}.\]
It is clear that $|\mathcal{R}|=|\mathcal{S}|\ll |T||A||X|$, and $|T|\le |A+A-A|$.

Lemma \ref{l:plunnecke-ruzsa} implies that for any $0<\delta<1$, there exists a nonempty set $A'\subset A$ with $|A'|\ge (1-\delta)|A|$ satisfying
\[|A+A-A'|\ll \frac{|A-A|^2}{|A|}.\]
Since we can choose $\delta$ such that $|A'|=\Theta(|A|)$ \footnote{$X=\Theta(Y)$ means that there exist positive constants $C_1$ and $C_2$ such that $C_1Y\le X\le C_2Y$}, we can assume that $|T|\ll \frac{|A-A|^2}{|A|}.$ This implies that
\[|\mathcal{R}|, |\mathcal{S}|\ll |A-A|^2|X|.\]
By the assumption, we have $|\RR| \ll p^2$.
This allows us to apply Theorem \ref{thm:rudnev},
 assuming we can prove an upper bound on the maximum number $k$ for which there is a line that contains $k$ points of $\RR$ and is contained in $k$ planes of $\S$. The projection of $\RR$ onto the first two coordinates is $\{t^2: t \in T\}\times A$,
so each line contains at most $\max\{|A|,|T|\}$ points of $\RR$, unless it is vertical, in which case it could contain $|X|$ points of $\RR$.
However, the planes in $\mathcal{S}$ contain no vertical lines, so in this case the hypothesis of Theorem \ref{thm:rudnev} is satisfied with $k = \max\{|A|,|T|\}\ll |A-A|^2/|A|$.

Therefore, Theorem \ref{thm:rudnev} implies that
\[E\ll |X|^{3/2}|A-A|^3+|X||A-A|^4/|A|.\]
By the Cauchy-Schwarz inequality, we have 
\[|\{(b-a)^3+a^3+x\colon a, b \in A, ~x\in X\}|\gg \frac{|A|^{4}|X|^2}{E}\gg \min \left\lbrace  \frac{|X|^{1/2}|A|^4}{|A-A|^3},  \frac{|X||A|^5}{|A-A|^4}\right\rbrace.\]
This completes the proof of the lemma.
\end{proof}
\begin{proof}[Proof of Theorem \ref{co0}]
Since the cubic distance function is invariant under translations, we assume that $0\in A$. It follows from the Pl\"unnecke-Ruzsa inequality that there exists a set $X\subset (A-A)^3$ with $|X|=\Theta(|(A-A)|)$ such that
\[|X+(A-A)^3+(A-A)^3|=|X+2(A-A)^3|\ll \frac{|(A-A)^3+(A-A)^3|^2}{|(A-A)^3|^2}|(A-A)^3|.\]
This implies that 
\[|(A-A)^3+(A-A)^3|^2\gg |A-A|\cdot |X+(A-A)^3+(A-A)^3|.\]
On the other hand, if $|A-A|^2|X|> p^2$, then we have $|A-A|\gg p^{2/3}$. This implies that $|A-A|\gg |A|^{8/7}$ since $|A|\le p^{7/12}$, and we are done. Thus, we may assume $|A-A|^2|X|\le p^2$, and it follows from Lemma \ref{thm2*} that 
\[|X+(A-A)^3+(A-A)^3|\gg \frac{|A|^4}{|A-A|^{5/2}}.\]
Therefore, we obtain
\[|(A-A)^3+(A-A)^3|^2\gg \frac{|A|^4}{|A-A|^{3/2}},\]
which leads to  
\[\max\left\lbrace |(A-A)^3+(A-A)^3|, |A-A|\right\rbrace\gg |A|^{8/7}.\]
This concludes the proof of the theorem.
\end{proof}
\section{Proofs of Theorems \ref{mu4}, \ref{mu5}, and \ref{mu6}}
We use of the following lemmas in the proofs of Theorems \ref{mu4}-\ref{mu6}.
\begin{lemma}\label{buc1}
Let $f(x)\in \mathbb{F}_p[x]$ be a quadratic polynomial. For $A\subset \mathbb{F}_p$ with $|A|\le p^{5/8}$, we have
\[|f(A)+A|\gg |A|^{6/5}.\]
\end{lemma}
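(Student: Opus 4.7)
The plan is to reduce Lemma~\ref{buc1} to the Pham--Vinh--de Zeeuw estimate $|A+A^2|\gg |A|^{6/5}$ from \cite{P}, which holds in the same range $|A|\le p^{5/8}$. Writing the quadratic polynomial $f$ in completed-square form $f(x)=\alpha(x+c)^2+\gamma$ with $\alpha\ne 0$, translation invariance together with scaling by $\alpha^{-1}$ (which preserves cardinality) gives
\[ |f(A)+A| \;=\; |\alpha(A+c)^2+A+\gamma| \;=\; |(A+c)^2+\alpha^{-1}A|. \]
Setting $U:=A+c$ and $V:=\alpha^{-1}A$, both of cardinality $N:=|A|$, it therefore suffices to establish the two-set estimate
\[ |U^2+V|\gg N^{6/5} \qquad\text{whenever } N\le p^{5/8}. \]

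Although \cite{P} states the bound only in the diagonal case $U=V=A$, the proof there proceeds via a point-plane incidence argument whose ingredients extend to the two-set setting with no essential change. The key quantity is the additive energy
\[ E^+(U^2,V)\;=\;\left|\{(u_1,u_2,v_1,v_2)\in U^2\times V^2:u_1^2+v_1=u_2^2+v_2\}\right|, \]
which is bounded by rewriting the defining equation as $(u_1-u_2)(u_1+u_2)=v_2-v_1$ and, after a linear substitution (for instance $w=u_1+u_2$, so that $u_1^2-u_2^2=w(u_1-u_2)$ becomes linear in $u_1$ for fixed $w$), encoding it as incidences between a point set $\mathcal{R}\subset \mathbb{F}_p^3$ built from $U$ and $V$ and a plane set $\mathcal{S}$ indexed by $w$. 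A standard Cauchy--Schwarz argument then yields
\[ |U^2+V|\;\ge\;\frac{(|U|\,|V|)^2}{E^+(U^2,V)}\;\gg\;N^{6/5}. \]

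The main obstacle is verifying the collinearity hypothesis of Theorem~\ref{thm:rudnev}: one must show that the parameter $k$ controlling lines rich in both $\mathcal{R}$ and $\mathcal{S}$ is essentially constant. Following \cite{P}, this is handled by observing that under the natural lifting the points of $\mathcal{R}$ lie on a low-degree variety (a parabolic cylinder in the above setup), so that any non-vertical line meets $\mathcal{R}$ in only a bounded number of points, while vertical lines are ruled out because the planes in $\mathcal{S}$ do not contain lines in the critical direction. The hypothesis $N\le p^{5/8}$ enters exactly to enforce the size constraint $|\mathcal{R}|\ll p^2$ required by Theorem~\ref{thm:rudnev} with the chosen balance of exponents in Rudnev's two-term bound.
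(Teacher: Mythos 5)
Your reduction $|f(A)+A|=|(A+c)^2+\alpha^{-1}A|=|U^2+V|$ is a correct cardinality identity, but the rest of the argument has a real gap: you assert that the proof of $|A+A^2|\gg |A|^{6/5}$ in \cite{P} ``extends to the two-set setting with no essential change,'' and that a point-plane incidence bound on the four-variable energy $E^+(U^2,V)$ gives $E^+(U^2,V)\ll N^{14/5}$. Neither step is justified, and the second one cannot work as sketched. With only two free parameters on each side of $u_1^2+v_1=u_2^2+v_2$, any natural Rudnev encoding produces a point set and a plane set each of size $N^2$; Theorem~\ref{thm:rudnev} then gives at best $I(\mathcal{R},\mathcal{S})\ll (N^2)^{1/2}\cdot N^2 + kN^2 = N^3+kN^2$, so even in the ideal case $k\ll 1$ you get $E\ll N^3$ and hence only the trivial bound $|U^2+V|\gg N^4/N^3=N$. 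Your proposed change of variables $w=u_1+u_2$ does not fix this: the equation becomes $wd+v_1=v_2$ with $d=u_1-u_2$, but the plane coefficient $w$ is tied to the point's $d$-coordinate (both are functions of $(u_1,u_2)$), so this is not a legitimate point-plane incidence configuration, and forcing it into one makes the planes degenerate (parallel or containing common rich lines).

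What the paper actually does, and what makes the $6/5$ exponent reachable, is to move to a \emph{six}-variable energy over the triple product $(A+f(A))\times f(A)\times A$: it counts solutions of $f(x-y)+z=t$ and observes that for every $(u,v,w)\in A^3$ the choice $x=u+f(v)$, $y=f(v)$, $z=w$, $t=w+f(u)$ is a solution lying in the prescribed sets, giving $\gtrsim |A|^3$ solutions and hence, by Cauchy--Schwarz, $E\gg |A|^6/|A+f(A)|$. Crucially this construction needs $f(x-y)=f(u)$ to land back in $f(A)$, i.e.\ it exploits that the ``squared'' set and the ``added'' set are both built from the same $A$. In your reduction $U=A+c$ and $V=\alpha^{-1}A$ are indeed affine images of the same $A$, so this relationship is available, but your proof sketch discards it by treating $U,V$ as arbitrary sets of size $N$; the resulting general two-set claim $|U^2+V|\gg N^{6/5}$ for arbitrary $U,V$ is strictly stronger than the lemma and is not what \cite{P} proves. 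If you want to salvage your route you would have to carry the affine relation $U=\alpha V+c'$ through the argument and re-introduce the auxiliary set $U^2+V$ with three free parameters on each side, at which point you have essentially reconstructed the paper's proof.
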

\begin{proof}
Without loss of generality, we can assume that $f(x)=ax^2+bx$ with $a\ne 0$. Consider the following equation
\begin{equation}\label{eq:eq1}a(x-y)^2+b(x-y)+z=t,
\end{equation}
with $x\in A+f(A)$, $y\in f(A)$, $z\in A$, and $t\in A+f(A)$. Since $f$ is a quadratic polynomial, we have $|f(A)|=\Theta(|A|)$.

Note that for any $u,v,w\in A$, a solution of \eqref{eq:eq1} is given by $x =u+f(v)\in A+f(A)$, $y = f(v)\in f(B)$, $z = w\in A$, and $t = w+f(u)\in A+f(A)$.
Therefore, we have
\begin{equation}\label{eq:eq2}
|A|^3\le\left|\left\{(x,y,z,t)\in (A+f(A))\times f(A)\times A\times (A+f(A)): a(x-y)^2+b(x-y)+z=t\right\}\right|.
\end{equation}
If we define $E$ to be the cardinality of the following set
\[\left\lbrace (x,y,z,x',y',z')\in ((A+f(A))\times f(A)\times A)^2\colon f(x-y)+z=f(x'-y')+z'\right\rbrace, \]
then \eqref{eq:eq2} together with the Cauchy-Schwarz inequality give
\begin{equation}\label{eq:eq3}
\frac{|A|^6}{|A+f(A)|}\ll E.
\end{equation}

To bound $E$, we use Theorem \ref{thm:rudnev} for the following point set
\[ \mathcal{R} = \{(ax,y', bx+ax^2+z-a(y')^2+by'): (x,y',z)\in (A+f(A))\times f(A)\times A\}\]
and the following set of planes
\[ \mathcal{S} = \{-2yX + 2ax'Y + Z = a(x')^2+bx'+z' - ay^2+by: (x',y,z')\in (A+f(A))\times f(A)\times A\}.\]
Note that if $|A+f(A)|\gg |A|^{6/5}$, then we are already done. Therefore, we can assume that $|A+f(A)|\ll |A|^{6/5}$, from which we obtain $|\mathcal{R}|=|A+f(A)||f(A)||A|\ll |A|^{16/5}\ll p^2$, since $|A|\ll p^{5/8}$. The projection of $\RR$ onto the first two coordinates is $(A+f(A))\times f(A)$,
so each line contains at most $\max\{|A+f(A)|, |f(A)|\} = |A+f(A)|$ points of $\RR$, unless it is vertical, in which case it may contain $|A|$ points of $\RR$.
However, the planes in $\mathcal{S}$ contain no vertical lines, so in this case the hypothesis of Theorem \ref{thm:rudnev} is satisfied with $k = |A+f(A)|$. Thus, Theorem \ref{thm:rudnev} implies that 
\begin{equation}\label{eq:eq4}
E\ll I(\mathcal{R}, \mathcal{S})\ll  |A+f(A)|^{3/2}|A|^3 + |A+f(A)|^2|A|^2.
\end{equation}
If $|A+f(A)|^2|A|^2$ is asymptotically larger than $|A+f(A)|^{3/2}|A|^3$, then $|A+f(A)| \gg |A|^2$, so we are done.
Otherwise, we can assume that $|A+f(A)|^{3/2}|A|^3$ is bigger than $|A+f(A)|^2|A|^2$,
so combining \eqref{eq:eq3} and \eqref{eq:eq4} gives
\[\frac{|A|^6}{|A+f(A)|} \ll |A+f(A)|^{3/2}|A|^3,\]
which leads to
\[|f(A)+A|\gg |A|^{6/5}.\]
This completes the proof of the lemma.
\end{proof}
\begin{lemma}\label{mu1}
Let $\mathbb{F}_p$ be a prime field of order $p$, and suppose that $h$ is a generator of $\mathbb{F}_p^*$, and $1\le N\le p^{2/3}$ is an integer. Then
\[\left\vert \left\lbrace h^x+h^y\colon 1\le x, y\le N \right\rbrace\right\vert\gg N^{3/2}.\]
\end{lemma}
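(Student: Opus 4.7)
The plan is to reduce the statement to a bound on the additive energy of $H = \{h, h^2, \ldots, h^N\}$. By Cauchy--Schwarz, $|H+H| \ge |H|^4/E^+(H)$, so it suffices to prove $E^+(H) \ll N^{5/2}$, where $E^+(H)$ counts $(x_1,x_2,x_3,x_4) \in [1,N]^4$ with $h^{x_1}+h^{x_2}=h^{x_3}+h^{x_4}$.

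First, I would carry out the standard translation reduction. Multiplying the defining equation by $h^{-x_3}$ and summing over the free parameter $x_3 \in [1,N]$ yields
\[
 E^+(H) \le N \cdot M, \qquad M := \#\{(u,v,w) \in [1-N,N-1]^3 : h^u + h^v = 1 + h^w\},
\]
since for each admissible triple $(u,v,w)$ the parameter $x_3$ ranges in an interval of length at most $N$. Thus it is enough to show $M \ll N^{3/2}$.

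Second, I would attack $M$ via Rudnev's point--plane incidence bound (Theorem~\ref{thm:rudnev}), following the same overall scheme used in the proof of Lemma~\ref{mot}. Factor the equation as $h^v(h^{w-v}-1) = h^u - 1$ and set $s = w-v$; the natural point set is $\mathcal{R} = \{(h^v, h^s-1, h^v(h^s-1)) : v, s \in [1-N,N-1]\} \subset \mathbb{F}_p^3$, which lies on the quadric $X_3 = X_1 X_2$ and has size $\asymp N^2 \ll p^2$ (using $N \le p^{2/3}$). The planes naturally encoding the constraint take the form $X_3 = h^u-1$, but these are all parallel and give a degenerate incidence count. To break this degeneracy I would introduce an auxiliary multiplicative parameter: multiplying the equation by a variable power $h^t$ and letting $t$ range over a short interval produces a genuinely two-parameter family of non-parallel planes, to which Rudnev's theorem applies with $k = O(1)$ (the small multiplicative doubling $|HH| = 2N-1 \asymp |H|$ is precisely what ensures no line supports more than $O(1)$ points and planes). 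The resulting incidence estimate gives $M \ll N^{3/2}$, hence $E^+(H) \ll N^{5/2}$ and $|H+H| \gg N^{3/2}$.

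The main obstacle is this construction of a non-degenerate Rudnev configuration: every obvious factorization of $h^u + h^v - h^w - 1 = 0$ produces families of parallel planes, and turning it into a truly three-dimensional incidence problem requires introducing an extra parameter (using the multiplicative symmetry of the equation) in a way that does not over-count or create bad lines. An alternative route is to apply Lemma~\ref{mot} directly with $g(x,y) = xy$ and $A = X = H$, yielding $|HH + H| \gg \min(N^{3/2},p) = N^{3/2}$, and then to convert this to $|H+H| \gg N^{3/2}$ via the decomposition $HH \subset H \cup h^N H$ together with Lemma~\ref{l:plunnecke-ruzsa}; here the nontrivial point is controlling the additively shifted copy $h^N H + H$ in terms of $H + H$.
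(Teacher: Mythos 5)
Your proposal does not reach a complete proof by either of the two routes you sketch, and both gaps are substantive rather than routine.

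In the first (energy) route, the translation reduction $E^+(H)\le N\cdot M$ is fine, but the incidence setup that is supposed to give $M\ll N^{3/2}$ is never actually produced. As you note yourself, every natural factorization of $h^u+h^v=1+h^w$ yields a one-parameter family of \emph{parallel} planes, and Rudnev's theorem with $\mathcal{R}$ on the quadric $X_3=X_1X_2$ (size $\asymp N^2$) against $\asymp N$ parallel planes fails the hypothesis $|\mathcal{R}|\le|\mathcal{S}|$; after dualizing one only gets $M\ll N^{5/2}$, which is useless. The suggestion to ``multiply by a variable power $h^t$'' to obtain a genuine two-parameter plane family and to claim $k=O(1)$ from small multiplicative doubling is exactly the hard part, and it is left entirely unspecified. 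Until that construction is written down and the $k$-rich line condition is verified, this route is not a proof.

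In the second route you are very close to the paper's actual argument, but the specific choice $A=X=H$ creates a gap that cannot be repaired in the way you propose. With that choice $H\cdot H=\{h^2,\dots,h^{2N}\}$ overshoots $H$, so $HH+H\not\subset H+H$, and you then need to control $|h^NH+H|$ in terms of $|H+H|$. Pl\"unnecke--Ruzsa (Lemma~\ref{l:plunnecke-ruzsa}) does not do this: it controls iterated \emph{additive} sumsets $|X+kB|$ under a doubling hypothesis on $A+B$, not sums with a multiplicative dilate $h^NH$; there is no general inequality of the form $|cH+H|\ll |H+H|^{O(1)}$. The paper avoids the issue entirely by taking the shorter progression $A=\{h^x:1\le x\le N/2\}$ together with $X=H$, so that $A\cdot A=\{h^2,\dots,h^N\}\subset H$ and hence $A\cdot A+X\subset\{h^x+h^y:1\le x,y\le N\}$ with no loss; Lemma~\ref{mot} with $g(x,y)=xy$ then gives $|A\cdot A+X|\gg\min\{|A||X|^{1/2},p\}\asymp N^{3/2}$ once $N\le p^{2/3}$. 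That halving of the range is the one missing idea: after it, no Pl\"unnecke--Ruzsa step is needed at all.
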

\begin{proof}
Define $A:=\{h^x\colon 1\le x\le N/2\}$, and $X:=\{h^x\colon 1\le x\le N\}$. Then one can check that
\[|\left\lbrace h^x+h^y\colon 1\le x, y\le N\right\rbrace|\gg |A\cdot A+X|.\]
Thus the lemma follows directly from Lemma \ref{mot}.
\end{proof}
\begin{proof}[Proofs of Theorems \ref{mu4}, \ref{mu5}, and \ref{mu6}]
Theorem \ref{mu4} follows from Lemmas \ref{mot} and \ref{buc1}. Theorem \ref{mu5} follows directly from Lemmas \ref{mot} and \ref{mu1}.
Theorem \ref{mu6} follows from Lemmas \ref{ba} and \ref{mu1}.
\end{proof}
\section{Proof of Theorem \ref{maymay}}
To prove Theorem \ref{maymay}, we use the following lemma, which follows directly from Lemmas $2.2$ and $2.3$ in \cite{P}. We refer the reader to \cite{P} for a detailed proof.
\begin{lemma}\label{tongtong}
Let $f(x, y, z)\in \mathbb{F}_p[x,y,z]$ be a quadratic polynomial that depends on each variable and is not of the form $g(h(x)+k(y)+l(z))$. Let $A, B,C\subset \mathbb{F}_p$ with $|A|=|B|\le |C|$ and $|A||B||C|\ll p^2$. Then we have 
\[\left\vert \left\lbrace(x, y, z, x', y', z')\in (A\times B\times C)^2\colon f(x, y, z)=f(x', y', z')\right\rbrace\right\vert\le (|A||B||C|)^{3/2}+|A||B||C|^2.\]
\end{lemma}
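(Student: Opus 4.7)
The plan is to interpret the collision count on the left-hand side as an incidence count between points and planes in $\mathbb{F}_p^3$ and then apply Rudnev's theorem (Theorem~\ref{thm:rudnev}). This is the strategy used in \cite{P}; the present statement is the collision-count bound that underlies Lemmas~2.2 and 2.3 there, and the proof recombines them.

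\textbf{Step 1 (Normal form).} The hypotheses on $f$---quadratic, depending on each variable, and not of the form $g(h(x)+k(y)+l(z))$---ensure that, after an invertible linear change in each of the variables $x,y,z$ separately (which only replaces $A,B,C$ by affine images of the same sizes, so all estimates go through), $f$ contains at least one genuine bilinear cross term, with the other quadratic coefficients in non-degenerate position. By relabeling, we may assume this cross term is $dxy$ with $d\ne 0$; this is essentially what Lemma~2.2 of \cite{P} provides.

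\textbf{Step 2 (Incidence setup).} The difference $f(x,y,z)-f(x',y',z')$ contains no genuine primed-versus-unprimed bilinear terms on its own, so such a coupling must be manufactured. Apply the identity
\[uv-u'v'=u(v-v')+v'(u-u')\]
to each mixed-term difference in $f(x,y,z)-f(x',y',z')$ and collect. After rearrangement, the equation $f(x,y,z)=f(x',y',z')$ takes the shape
\[\alpha(x',y',z')\,X(x,y,z)+\beta(x',y',z')\,Y(x,y,z)+\gamma(x',y',z')\,Z(x,y,z)=\delta(x',y',z'),\]
where $(X,Y,Z)$ is a triple of explicit polynomial expressions in $(x,y,z)$ and $\alpha,\beta,\gamma,\delta$ depend polynomially on $(x',y',z')$. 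Thus each $(x,y,z)\in A\times B\times C$ yields a point in $\mathbb{F}_p^3$ and each $(x',y',z')\in A\times B\times C$ yields a plane, with $f(x,y,z)=f(x',y',z')$ corresponding exactly to incidence. Set
\[\mathcal{R}=\{(X(x,y,z),Y(x,y,z),Z(x,y,z)):(x,y,z)\in A\times B\times C\}\]
and let $\mathcal{S}$ be the corresponding set of planes. Then $|\mathcal{R}|,|\mathcal{S}|\le|A||B||C|\ll p^{2}$ by hypothesis, and the left-hand side of the lemma is bounded, up to an absolute constant (absorbing any $O(1)$-to-one multiplicity in the map), by $I(\mathcal{R},\mathcal{S})$.

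\textbf{Step 3 (Rich-line bound and Rudnev).} Since $|C|$ is the largest of the three set sizes, inspection of the polynomials $(X,Y,Z)$ shows that the projection of $\mathcal{R}$ onto a suitable pair of coordinates lies inside a Cartesian product each of whose factors has size at most $|C|$; consequently any non-vertical line contains at most $|C|$ points of $\mathcal{R}$. The non-degeneracy from Step~1 (in particular $d\ne 0$) guarantees that no plane in $\mathcal{S}$ contains a vertical line, so vertical lines contribute nothing to Rudnev's estimate. Applying Theorem~\ref{thm:rudnev} with $k\ll|C|$ gives
\[E\ll I(\mathcal{R},\mathcal{S})\ll|\mathcal{R}|^{1/2}|\mathcal{S}|+k\,|\mathcal{S}|\ll(|A||B||C|)^{3/2}+|A||B||C|^{2},\]
which matches the claimed bound.

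The main obstacle is Step~2: organizing all three cross-differences of $f(x,y,z)-f(x',y',z')$ coherently so that the coefficients cleanly separate into functions of the unprimed versus primed variables, and verifying that the resulting linear system $\alpha X+\beta Y+\gamma Z=\delta$ is genuinely of rank three. This is precisely where the assumption that $f$ is not of the form $g(h(x)+k(y)+l(z))$ enters: if $f$ had that form, the manufactured coupling would collapse to rank at most one and the incidence interpretation would be useless. Once the rank-three structure is established and the rich-line bound $k\ll|C|$ is checked, the conclusion follows directly from Theorem~\ref{thm:rudnev}.
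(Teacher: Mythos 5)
The paper does not actually prove Lemma~\ref{tongtong}; it cites Lemmas~2.2 and~2.3 of \cite{P} and refers the reader there. So there is no in-paper proof to match against, but the paper's own proof of Lemma~\ref{buc1} shows exactly the Rudnev-incidence device that underlies the cited result, and we can use it to judge your reconstruction.

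Your overall strategy is right (put $f$ in normal form, convert collisions into point-plane incidences, apply Theorem~\ref{thm:rudnev} with $k\ll|C|$, and read off $(|A||B||C|)^{3/2}+|A||B||C|^2$), and the final Rudnev computation is correct. The gap is in Step~2, and it is substantive. You assert that ``each $(x,y,z)\in A\times B\times C$ yields a point'' and ``each $(x',y',z')\in A\times B\times C$ yields a plane,'' i.e.\ a clean unprimed/primed split. That cannot work: the polynomial $f(x,y,z)-f(x',y',z')$ has \emph{no} monomials mixing primed and unprimed variables, so any incidence encoding in which the point coordinates depend only on $(x,y,z)$ and the plane coefficients only on $(x',y',z')$ reduces to a family of parallel constraints of the form $F(x,y,z)=G(x',y',z')$, i.e.\ to planes $X=\mathrm{const}$. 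In that degenerate configuration Rudnev's theorem gives nothing. The identity $uv-u'v'=u(v-v')+v'(u-u')$ you invoke does not repair this: the factors $v-v'$ and $u-u'$ still mix primed and unprimed variables, so they can serve neither purely as a point coordinate nor purely as a plane coefficient.

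The device that actually works, which you can see explicitly in the paper's proof of Lemma~\ref{buc1}, is a \emph{shuffle} of the six variables between point and plane. There the point is $(ax,\,y',\,bx+ax^2+z-a(y')^2+by')$, depending on $(x,y',z)$, while the plane $-2yX+2ax'Y+Z=a(x')^2+bx'+z'-ay^2+by$ depends on $(y,x',z')$; the bilinear coupling is created precisely because each cross term like $-2axy$ is read as (point coordinate $ax$) times (plane coefficient $-2y$), and symmetrically $2ax'y'$ as (point coordinate $y'$) times (plane coefficient $2ax'$). Your proof needs to carry out this shuffle for a general quadratic in normal form and check, for the shuffled point set, that a suitable two-coordinate projection is a Cartesian product with both factor sizes $\le|C|$, and that the plane family contains no vertical lines. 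As written, your Step~2 would not produce a usable incidence problem, even though you correctly identified that step as the crux and correctly identified where the hypothesis ``$f$ not of the form $g(h(x)+k(y)+l(z))$'' enters.
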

We are now ready to give a proof of Theorem \ref{maymay}.
\begin{proof}[Proof of Theorem \ref{maymay}]
Without loss of generality, we assume that  $f(x, y)=ax^2+by^2+cxy+dx+ey$ with $a\ne 0$. Let $f'(x, y, z):=f(z-x, y)$. Consider the following equation
\begin{equation}\label{eqmay}f'(x, y, z)=t, \end{equation}
with $x\in A, y\in A, z\in A+A, t\in f(A, A)$. 

Note that for any $u,v,w\in A$, a solution of \eqref{eqmay} is given by $x =u\in A$, $y = v\in A$, $z = u+w\in A+A$, and $t = f(w, v)\in f(A, A)$.
Thus, we have
\begin{equation}\label{eq:eq2may}
|A|^3\ll\left|\left\{(x,y,z,t)\in A\times A\times (A+A)\times f(A, A): f'(x, y, z)=t\right\}\right|.
\end{equation}
Let $E$ be the cardinality of the following set
\[\left\lbrace (x,y,z,x',y',z')\in (A\times A\times (A+A))^2\colon f'(x, y, z)=f'(x', y', z')\right\rbrace. \]
Then \eqref{eq:eq2may} and the Cauchy-Schwarz inequality give
\begin{equation}\label{eq:eq3may}
\frac{|A|^6}{|f(A, A)|}\ll E.
\end{equation}
Before applying Lemma \ref{tongtong}, we need to show that $f'(x, y, z)$ is not of the form $g'(h'(x)+k'(y)+l'(z))$. By the contradiction, suppose $f'(x, y, z)=g'(h'(x)+k'(y)+l'(z))$. Then $g'$ is a polynomial of degree $2$ since $a\ne 0$. Thus, $h'$, $k'$, and $l'$ are linear polynomials. So we can write $f'(x, y, z)$ as
\[f'(x, y, z)=g'(\lambda_1x+\lambda_2y+\lambda_3z+\lambda_4),\]
for some $\lambda_1, \lambda_2, \lambda_3, \lambda_4 \in \mathbb{F}_p$. Since $g'$ is a polynomial of degree $2$, without loss of generality, we assume that $g'(x)=x^2+\lambda_5x+\lambda_6$ for some $\lambda_5, \lambda_6\in \mathbb{F}_p$. It follows from the definition of $f'$ that
\[\lambda_1^2=\lambda_3^2=a, ~2\lambda_1\cdot\lambda_3=-2a.\] 
This implies that $\lambda_1=-\lambda_3$. Hence, $f'$ can be presented as 
\[f'(x, y, z)=g'(\lambda_3(z-x)+\lambda_2 y+\lambda_4).\]
From here, we can rearrange the coefficients of $g'$ such that $g'(\lambda_3(z-x)+\lambda_2 y+\lambda_4)=g''(\lambda_3(z-x)+\lambda_2 y)$ for some $g''\in \mathbb{F}_p[x]$. This leads to 
\[f(z-x, y)=g''(\lambda_3(z-x)+\lambda_2 y),\]
which contradicts the assumption of the theorem. 

In other words, we have that $f'(x, y, z)$ is not of the form $g'(h'(x)+k'(y)+l'(z))$. 

If $|A|^2|A+A|\gg p^2$, then we have $|A+A|\gg |A|^{6/5}$ since $|A|\le p^{5/8}$, and we are done. Thus we can assume that $|A|^2|A+A|\ll p^2$. Lemma \ref{tongtong} with $B=A$ and $C=A+A$ implies that
\[E\le |A|^3|A+A|^{3/2}+|A|^2|A+A|^2.\]
Therefore, the theorem follows from the inequality (\ref{eq:eq3may}).
\end{proof}
We note that if we use the point-plane incidence bound due to Vinh \cite{vinh} for large sets in the proofs of Lemmas $2.2$ and $2.3$ in \cite{P}, then we are able to obtain the following version of Lemma \ref{tongtong} for large sets.
\begin{lemma}\label{tongtong1}
Let $\mathbb{F}_q$ be an arbitrary finite field.  Let $f(x, y, z)\in \mathbb{F}_q[x,y,z]$ be a quadratic polynomial that depends on each variable and is not of the form $g(h(x)+k(y)+l(z))$. Let $A, B,C\subset \mathbb{F}_q$, then we have 
\[\left\vert \left\lbrace(x, y, z, x', y', z')\in (A\times B\times C)^2\colon f(x, y, z)=f(x', y', z')\right\rbrace\right\vert\le \frac{(|A||B||C|)^2}{q}+q|A||B||C|.\]
\end{lemma}
One can follow identically the proof of Theorem \ref{maymay} with Lemma \ref{tongtong1} to obtain the following improvement of Vu's result for quadratic polynomials. We leave the detailed proof to the reader.
\begin{theorem}
Let $\mathbb{F}_q$ be an arbitrary finite field.  Let $f(x, y)\in \mathbb{F}_q[x,y]$ be a non-degenerate quadratic polynomial. Let $A$ be a set in $\mathbb{F}_q$, then we have 
\[\max\{|A+A|, |f(A, A)|\}\gg \min\left\lbrace \frac{|A|^2}{q^{1/2}}, q^{1/3}|A|^{2/3}\right\rbrace.\]
\end{theorem}
\section*{Acknowledgement}
The authors would like to thank Frank de Zeeuw for useful discussions and comments.

The first listed author was supported by Basic Science Research Program through the National Research Foundation of Korea(NRF) funded by the Ministry of Education, Science and Technology(NRF-2015R1A1A1A05001374). The second listed author was supported by Swiss National Science Foundation grant P2ELP2175050. The third, and fourth listed authors were partially supported by Swiss National Science Foundation grants 200020-162884 and 200021-175977.

\end{document}